\newcommand{\myauthor}{Benjamin Antieau and Ben Williams}
\newcommand{\mytitle}{Unramified division algebras do not always contain Azumaya maximal orders}
\title{\mytitle}
\author{\myauthor}
\definecolor{todo}{rgb}{1,0,0}
\definecolor{conditional}{rgb}{0,1,0}
\definecolor{e-mail}{rgb}{0,.40,.80}
\definecolor{reference}{rgb}{.20,.60,.22}
\definecolor{mrnumber}{rgb}{.80,.40,0}
\definecolor{citation}{rgb}{0,.40,.80}
\DeclareMathAlphabet{\mathpzc}{OT1}{pzc}{m}{it}
\DeclareMathOperator{\PGL}{PGL}
\DeclareMathOperator{\PU}{PU}
\DeclareMathOperator{\SL}{SL}
\DeclareMathOperator{\tors}{tors}
\DeclareMathOperator{\Spec}{Spec}
\DeclareMathOperator{\cl}{cl}
\DeclareMathOperator{\Hoh}{H}
\DeclareMathOperator{\Hom}{Hom}
\DeclareMathOperator{\Br}{Br}
\DeclareMathOperator{\per}{per}
\DeclareMathOperator{\ind}{ind}
\renewcommand{\P}{\operatorname{P}}
\newcommand{\et}{\mathrm{\acute{e}t}}
\newcommand{\weq}{\simeq}
\newcommand{\B}{\mathrm{B}}
\renewcommand{\cl}{\mathrm{cl}}
\newcommand{\Mod}{\mathrm{Mod}}
\newcommand{\we}{\simeq}
\newcommand{\iso}{\cong}
\newcommand{\A}{\mathds{A}}
\newcommand{\CC}{\mathds{C}}
\newcommand{\RR}{\mathds{R}}
\newcommand{\ZZ}{\mathds{Z}}
\newcommand{\Gm}{\mathds{G}_{m}}
\newcommand{\RP}{{\RR \mathrm{P}}}
\newcommand{\Ascr}{\mathscr{A}}
\newcommand{\Bscr}{\mathscr{B}}
\let\oldmarginpar\marginpar
\renewcommand\marginpar[1]{\-\oldmarginpar[\raggedleft\footnotesize #1]%
{\raggedright\footnotesize #1}}
\newcommand{\tensor}{\otimes}
\newcommand{\Brm}{\mathrm{B}}
\newcommand{\BPU}{{\Brm \PU}}
\newcommand{\SU}{\mathrm{SU}}
\newcommand{\BP}{\mathrm{BP}}
\newcommand{\BPGL}{\mathrm{BPGL}}
\newcommand{\Mrm}{\mathrm{M}}
\theoremstyle{plain}
\newtheorem{theorem}{Theorem}[section]
\newtheorem{lemma}[theorem]{Lemma}
\newtheorem{proposition}[theorem]{Proposition}
\newtheorem{corollary}[theorem]{Corollary}
\newtheorem{problem}[theorem]{Problem}
\newtheoremstyle{named}{}{}{\itshape}{}{\bfseries}{.}{.5em}{#1 \thmnote{#3}}
\theoremstyle{named}
\theoremstyle{definition}
\newtheorem{example}[theorem]{Example}
\theoremstyle{remark}
\begin{document}

\begin{abstract}
    We show that, in general, over a regular integral noetherian affine scheme $X$ of
    dimension at least $6$, there exist Brauer
    classes on $X$ for which the associated division algebras over the generic point have no Azumaya
    maximal orders over $X$. Despite the algebraic nature of the
    result, our proof relies on the topology of classifying spaces of algebraic groups.
\end{abstract}

\maketitle

\section{Introduction}

Let $K$ be a field. The Artin--Wedderburn Theorem implies that every central simple
$K$-algebra $A$ is isomorphic to an algebra $\Mrm_n(D)$ of $n\times
n$ matrices over a finite dimensional central $K$-division algebra $D$. One says $\Mrm_n(D)$ and $\Mrm_{n'}(D')$ are Brauer-equivalent when $D$
and $D'$ are isomorphic over $K$. The set of Brauer-equivalence classes forms a group under tensor product, $\Br(K)$, the Brauer group of $K$.
The \textit{index} of an equivalence class $\alpha=\cl(\Mrm_{n}(D))\in\Br(K)$ is the degree of the minimal representative, $D$ itself.

Let $X$ be a connected scheme. The notion of a central simple algebra over a
field was generalized by Auslander--Goldman
\cite{auslander-goldman} and by Grothendieck~\cite{grothendieck-brauer-2} to the concept of an Azumaya algebra over $X$. An Azumaya
algebra $\Ascr$ is a locally-free sheaf of algebras which \'etale-locally takes the form of
a matrix algebra. That is, there is an \'etale cover $\pi:U\rightarrow X$ such that
$\pi^*\Ascr\iso\Mrm_n(\mathscr{O}_U)$. In this case, the \textit{degree} of $\Ascr$ is $n$. Brauer equivalence and
a contravariant Brauer group functor may be defined in this context, generalizing the
definition of the Brauer group over a
field. Over a scheme, we do not have Artin--Wedderburn theory and consequently we cannot be certain without further work that a
Brauer class $\alpha\in\Br(X)$ contains an Azumaya algebra $\Ascr$ whose degree divides that of all other Azumaya algebras having class
$\alpha$. The index of $\alpha$ is therefore defined to be the greatest common divisor of the degrees of Azumaya algebras with Brauer class
$\alpha$, rather than the minimum among such degrees.

Let $X$ be a regular integral noetherian scheme with generic point $\Spec K$. Given a central simple $K$-algebra $A$, an
\textit{order} in $A$ over $X$ is a torsion-free coherent $\mathscr{O}_X$--algebra, $\Ascr$, such that $\Ascr \otimes_{\mathscr{O}_X} K \iso
A$. A \textit{maximal order} in $A$ over $X$ is an order which is not a proper subalgebra of any other
order in $A$ over $X$.

If $\Ascr$ is an Azumaya algebra on $X$ that restricts to $A$ over the generic point, the class $\alpha$ of $A$ must be in the image of the
map $\Br(X)\rightarrow\Br(K)$, a map which is known to be injective by~\cite{grothendieck-brauer-2}*{Corollaire 1.10}. In this paper, we
shall consider only classes $\alpha\in\Br(X)\subseteq\Br(K)$; these are said to be \textit{unramified along $X$\/}. An Azumaya algebra
$\Ascr$ that restricts to $A$ is a maximal order in $A$.

We ask the following questions. First, does a class $\alpha \in \Br(X)$ of index $d$ necessarily contain an Azumaya algebra $\Ascr$ of
degree $d$? Second, do all division algebras $A$ over $K$ for which $\alpha=\cl(A)$ is unramified along $X$ contain an Azumaya maximal order
$\Ascr$ over $X$? These questions are equivalent: if $A$ is an unramified division algebra, then any Azumaya maximal order in $A$ over $X$
has degree dividing the degrees of all other Azumaya algebras with class $\alpha$. On the other hand, under our assumption that $X$ is
regular and noetherian, the index $\ind(\alpha)$ can be computed either on $X$ or over $K$, by~\cite{aw3}*{Proposition 6.1}. If
$\ind(\alpha)=d$ and if $\Ascr$ is an Azumaya algebra of degree $d$ over $X$ with class $\alpha$, then it follows that $\Ascr$ is an Azumaya
maximal order in the unique division algebra $A$ with class $\alpha$ over $K$.

The questions were answered in the affirmative by Auslander and Goldman~\cite{auslander-goldman} when $X$ is a regular noetherian affine
scheme of Krull dimension at most $2$. They argue, by~\cite{auslander-goldman}*{Theorem 2.1}, that when $\alpha$ is unramified, a maximal
order is Azumaya if and only if it is locally free as an $\mathscr{O}_X$-module.  Since $X$ is noetherian, every order is contained in a
maximal order, maximal orders are necessarily reflexive $\mathscr{O}_X$--modules, and reflexive $\mathscr{O}_X$--modules are locally free
outside a closed subset of codimension no less than $3$.

After the proof of Proposition 7.4 of~\cite{auslander-goldman}, they write:
\begin{quote} \small
    It should be remarked that the condition [$\dim R\leq 2$] on the dimension of $R$ was used in the proof
    only to ensure that $\Delta$ contains a maximal order which is $R$-projective. It is not
    known at the present time whether the restriction on the dimension of $R$ is actually
    necessary.
\end{quote}
The existence of Azumaya maximal orders when $\alpha$ is unramified has implications in the study of low-dimensional schemes. For example, it was used
in~\cite{auslander-goldman}*{Section 7} to prove that the Brauer group of a regular integral noetherian commutative ring $R$ can be
identified as the intersection
\begin{equation*}
    \Br(R)=\bigcap_{p\in\Spec R^{(1)}}\Br(R_p),
\end{equation*}
ranging over all primes of height $1$ in $R$. This result was later extended to regular schemes of finite type over a
field by Hoobler~\cite{hoobler} using \'etale cohomology.

A second application of the low-dimensional result of Auslander and Goldman is found in the proof of de Jong~\cite{dejong} that
$\per(\alpha)=\ind(\alpha)$ for $\alpha\in\Br(k(X))$, the Brauer group of the function field of a surface over an algebraically closed
field $k$. De Jong's proof shows that one can reduce to the case where $\alpha\in\Br(X)$, where $X$ is a smooth projective model for $k(X)$, and
then one constructs an Azumaya algebra of degree equal to $\per(\alpha)$ and with Brauer
class $\alpha$. The reduction to the unramified case holds in all
dimensions, by de Jong and Starr~\cite{dejong-starr}. One is therefore naturally lead to ask whether an analogue to the second part might be
practicable in higher dimensions: if $X$ is smooth and projective and $\alpha\in\Br(k(X))$ is unramified along $X$, does there exist an
Azumaya algebra on $X$ with class $\alpha$ and degree equal to the index of $\alpha$?

We show that the restriction $\dim X\leq 2$ in the result of Auslander and Goldman is necessary, and that the answer to either of our equivalent questions is negative even when $X$ is
a connected smooth affine complex variety. We shall require our varieties to be irreducible in the sequel.

\begin{theorem}\label{thm:main}
  Let $n>1$ be an odd integer. There exists a smooth affine variety $X$ of dimension $6$ over the complex numbers and an Azumaya algebra
  $\Ascr$ of degree $2n$ and period $2$ such that there is no degree-$2$ Azumaya algebra
  with class $\cl(\Ascr)\in\Br(X)$.
\end{theorem}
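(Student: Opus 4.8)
The plan is to reduce the statement to a problem in the homotopy theory of the classifying spaces $\BPU_m$ and then to realize the relevant datum by a smooth affine variety. Write $\Brt(Y)=\Hoh^3(Y;\ZZ)_{\tors}$ for the topological Brauer group. It suffices to produce a smooth affine complex variety $X$ of dimension $6$ and a degree-$2n$ Azumaya algebra $\Ascr$ on $X$ of period $2$ such that the image $u\in\Hoh^3(X(\CC);\ZZ)$ of $\cl(\Ascr)$ under the comparison map $\Br(X)\to\Brt(X(\CC))$ is nonzero and is \emph{not} the Brauer class of any topological $\PU_2$-bundle on $X(\CC)$: indeed, a degree-$2$ Azumaya algebra $\Bscr$ on $X$ with $\cl(\Bscr)=\cl(\Ascr)$ would have an associated $\PGL_2$-torsor whose analytification is a $\PU_2$-bundle on $X(\CC)$ with Brauer class $u$, using that $\Hoh^1_{\et}(X,\PGL_m)\to[X(\CC),\BPU_m]$ is compatible with the formation of Brauer classes; this contradicts our hypothesis on $u$. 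Here $\dim_\CC X=6$ guarantees, by Andreotti--Frankel, that $X(\CC)$ has the homotopy type of a CW complex of dimension at most $6$, which is what makes the homotopy theory tractable.

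The heart of the argument is topological. The crucial point is a discrepancy between Postnikov towers: since $\pi_i(\SU_m)=\pi_i(\SU)$ for $i\le 2m-1$, one has $\pi_4(\SU_{2n})=0$ and hence $\pi_5(\BPU_{2n})=0$ whenever $n\ge 2$, whereas $\pi_5(\BPU_2)=\pi_4(\SU_2)=\pi_4(\sphere^3)=\ZZ/2$. Fix a $6$-dimensional CW complex $M$ and a class $x\in\Hoh^2(M;\ZZ/2)$, and put $u=\beta(x)\in\Hoh^3(M;\ZZ)$, where $\beta$ denotes the integral Bockstein. On one hand, the obstructions to lifting the map $M\to K(\ZZ/2n,2)$ classifying $(x,0)\in\Hoh^2(M;\ZZ/2n)$ to $\BPU_{2n}$ reduce, because $\pi_5(\BPU_{2n})=0$ and $\dim M=6$, to a single primary obstruction $o_{2n}(x)\in\Hoh^5(M;\ZZ)$; and by naturality of the first $k$-invariant along $\BPU_2=\B\SO_3\xrightarrow{\,-\otimes I_n\,}\BPU_{2n}$, which is multiplication by $n$ on $\pi_4$, one has $o_{2n}(x)=n\,o_2(x)$, where $o_2(x)$ is the corresponding $\B\SO_3$-obstruction; since $o_2(x)$ is annihilated by a power of $2$ and $n$ is odd, the two vanish simultaneously. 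On the other hand, realizing a class $x'$ as the second Stiefel--Whitney class of an $\SO_3$-bundle --- equivalently, realizing $\beta(x')$ by a $\PU_2$-bundle --- carries, beyond the primary obstruction $o_2(x')\in\Hoh^5(M;\ZZ)$, a \emph{secondary} obstruction in $\Hoh^6(M;\ZZ/2)$ coming precisely from $\pi_5(\BPU_2)=\ZZ/2$. So it suffices to construct $M$ and $x$ with $u=\beta(x)$ of order $2$ such that $o_2(x)=0$, whence $u$ is carried by a topological $\PU_{2n}$-bundle on $M$, while for every $x'$ with $\beta(x')=u$ and every admissible lift the primary or the secondary obstruction to an $\SO_3$-reduction is nonzero, whence $u$ is carried by no $\PU_2$-bundle. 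Building such an $M$ --- by an explicit cell-by-cell construction, attaching cells so as to kill $o_2$ while keeping the relevant secondary cohomology operation (governed by the $k$-invariant $k_6$ of $\B\SO_3$) nonzero modulo the indeterminacy coming from $\Hoh^4(M;\ZZ)$ --- is the main obstacle. Dimension $6$ is genuinely used here: in dimension at most $5$ there is no $\Hoh^6$-obstruction, so this mechanism produces no counterexample, and dimension $6$ is the first in which it can succeed. Since $M$ may be chosen independently of $n$, the same $M$ works for every odd $n\ge 3$, and in particular $\indt(u)=2$.

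Finally I would realize this algebraically. Starting from Totaro's smooth quasi-projective algebraic approximations to $\BPGL_{2n}$, which carry a universal algebraic $\PGL_{2n}$-torsor and whose complex points are highly connected to $\BPU_{2n}$, and applying Jouanolou's device to pass to homotopy-equivalent smooth affine schemes together with an affine Bertini argument to cut down to dimension $6$, one obtains a smooth affine complex $6$-fold $X$ with a map $X(\CC)\to M$ inducing isomorphisms on singular cohomology in degrees at most $6$ --- so the obstruction-theoretic analysis above applies to $X(\CC)$ verbatim --- and an algebraic $\PGL_{2n}$-torsor on $X$ whose analytification is the pullback of the $\PU_{2n}$-bundle constructed on $M$. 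Taking this torsor to be the sheaf of endomorphisms of a rank-$2n$ twisted sheaf on a $\mu_2$-gerbe on $X$ forces the associated degree-$2n$ Azumaya algebra $\Ascr$ to have period dividing $2$, while $\cl(\Ascr)\neq 0$ because its analytic image $u$ is nonzero of order $2$. Combined with the reduction of the first paragraph and the topological non-realizability established in the second, this proves the theorem.
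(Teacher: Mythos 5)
Your overall strategy is the right one: reduce to a purely topological non-realizability statement via the comparison $\Br(X)\to\Hoh^3(X(\CC);\ZZ)_{\tors}$, identify the key discrepancy $\pi_5(\BPU_2)=\ZZ/2$ versus $\pi_5(\BPU_{2n})=0$ for $n\ge 2$, and realize the example algebraically through Totaro's approximations, Jouanolou's device, and an affine Lefschetz/Bertini cut to dimension $6$. That matches the architecture of the paper's proof. However, there are two genuine gaps.

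First, the heart of the matter is left unproved. You set up a perfectly reasonable obstruction-theoretic framework --- a primary obstruction in $\Hoh^5(M;\ZZ)$ common to both $\BPU_2$ and $\BPU_{2n}$, plus a secondary obstruction in $\Hoh^6(M;\ZZ/2)$ specific to $\BPU_2$ --- but you never actually produce an $M$ for which the primary obstruction can be killed while the secondary obstruction remains nonzero modulo its indeterminacy, describing this as ``the main obstacle'' and leaving it as a cell-by-cell construction to be carried out. That is precisely the technical core, and it is not routine: the indeterminacy coming from $\Hoh^4(M;\ZZ)$ together with the first $k$-invariant of the fibration $\BSU_2\to\BPU_2\to K(\ZZ/2,2)$ must be controlled. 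The paper does this by a direct computation (Lemma~\ref{l:1}): any self-map of $\tau_{\le 5}\BPGL_2$ inducing an isomorphism on $\Hoh^2(\cdot,\ZZ/2)$ is forced, by analyzing $\Hoh^*(\RP^3;\ZZ/2)$, the Hurewicz map, and the natural transformation $(\Sigma\eta)^*\colon\pi_3\to\pi_4$, to induce an isomorphism on $\pi_5\iso\ZZ/2$. This replaces your unproved ``secondary obstruction is nonzero'' with a verifiable statement about self-maps.

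Second, the algebraic realization as you describe it does not close. You construct $M$ abstractly, then invoke a Totaro approximation to $\BPGL_{2n}$, then posit ``a map $X(\CC)\to M$'' and ``an algebraic $\PGL_{2n}$-torsor on $X$ whose analytification is the pullback of the $\PU_{2n}$-bundle constructed on $M$.'' But Totaro's approximations come equipped with a map to $\BPGL_{2n}$, not to an arbitrary $M$; nothing in your construction makes $M$ the target of an algebraically induced map, so the compatibility you need is not available. Moreover, the universal Azumaya algebra on an approximation to $\BPGL_{2n}$ has period $2n$, not $2$, and the proposed fix via a rank-$2n$ twisted sheaf on a $\mu_2$-gerbe would need to be shown to exist on that variety and to produce the intended Brauer class, which is not addressed. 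Both difficulties vanish if, as in the paper, one replaces $\BPGL_{2n}$ throughout by $\BP(2,2n)=\B(\SL_{2n}/\mu_2)$: its universal Azumaya algebra has period dividing $2$ by construction, its Totaro approximation $Y$ carries this algebraically, and the $7$-equivalence $Y\to\BP(2,2n)$ hands you the required $6$-equivalence to $\tau_{\le 5}\BP(2,2n)$ for free. In effect, the $M$ you were trying to build by hand is just a finite approximation to $\BP(2,2n)$, and using that model resolves both gaps simultaneously.
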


Examples as in the theorem exist in all dimensions at least equal to $6$; it is unknown whether they exist when $3\leq\dim X\leq 5$. Our
result contrasts with \cite{demeyer}*{Corollary 1}, where a Wedderburn-type theorem is shown to hold for semi-local rings having only
trivial idempotents.

\begin{corollary}
    There exists a smooth affine variety $X$ of dimension $6$ over the complex numbers and Brauer
    classes $\alpha\in\Br(X)$ such that the division algebra over the generic point has no
    Azumaya maximal order over $X$.
\end{corollary}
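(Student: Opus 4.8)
The plan is to read the corollary off from Theorem~\ref{thm:main} together with the index comparison recalled in the introduction; indeed, the corollary is essentially a restatement of the second of the two equivalent questions discussed there. Fix an odd integer $n>1$ and let $X$, a smooth affine complex variety of dimension $6$ and in particular regular, integral, and noetherian, together with $\Ascr$, be as in Theorem~\ref{thm:main}, with generic point $\Spec K$. Set $\alpha=\cl(\Ascr)\in\Br(X)$.

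The first step is to compute $\ind(\alpha)$. Since $\Ascr$ is an Azumaya algebra of degree $2n$ with class $\alpha$, we have $\ind(\alpha)\mid 2n$. Since $\Br(X)\hookrightarrow\Br(K)$ is injective, $\alpha$ and its restriction $\alpha|_K$ have the same order, so $\per(\alpha|_K)=2$; over the field $K$ this divides $\ind(\alpha|_K)$, and index and period share the same prime divisors, so $\ind(\alpha|_K)$ is a power of $2$. Because the index may be computed either on $X$ or over $K$ when $X$ is regular, integral, and noetherian, by~\cite{aw3}*{Proposition 6.1}, we have $\ind(\alpha)=\ind(\alpha|_K)$; comparing with $\ind(\alpha)\mid 2n$ and using that $n$ is odd forces $\ind(\alpha)=2$.

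The second step identifies the generic fibre and concludes. By Artin--Wedderburn there is a unique central division $K$-algebra $D$ with $\cl(D)=\alpha|_K$, and $\deg D=\ind(\alpha|_K)=2$, so $D$ is a quaternion division algebra over $K$. I claim $D$ has no Azumaya maximal order over $X$. If $\Bscr$ were such an order, then $\Bscr\otimes_{\mathscr{O}_X}K\iso D$ would force $\Bscr$ to be locally free of rank $\dim_K D=4$, hence an Azumaya algebra of degree $2$, and its Brauer class would map to $\cl(D)=\alpha|_K$ under $\Br(X)\hookrightarrow\Br(K)$, hence equal $\alpha=\cl(\Ascr)$. This contradicts the conclusion of Theorem~\ref{thm:main} that no degree-$2$ Azumaya algebra has class $\cl(\Ascr)$. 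Thus $\alpha$ is a Brauer class of the kind the corollary asserts.

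The only substantive input is Theorem~\ref{thm:main} itself; the remaining points, namely the period--index bookkeeping above and the fact, already noted in the introduction, that an Azumaya algebra restricting to a central simple $K$-algebra $A$ at the generic point is automatically a maximal order in $A$, are routine. Together they say that ``Azumaya maximal order in $D$'' and ``degree-$2$ Azumaya algebra with class $\alpha$'' name the same objects, so that no genuine obstacle arises in passing from the theorem to the corollary.
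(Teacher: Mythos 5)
Your proof is correct and takes essentially the same approach as the paper: take $X$ and $\Ascr$ from Theorem~\ref{thm:main}, compute $\ind(\alpha)=2$ via period--index bookkeeping and~\cite{aw3}*{Proposition 6.1} so that the generic division algebra is a quaternion algebra, and observe that an Azumaya maximal order in it would be a degree-$2$ Azumaya algebra with class $\cl(\Ascr)$, contradicting the theorem. Your version simply spells out the rank and Brauer-class identifications for the hypothetical maximal order more explicitly than the paper does.
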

\begin{proof}
  Fix an odd integer $n>1$. Take $X$ and $\alpha=\cl(\Ascr)$ as provided by the theorem. Since $\per(\alpha)=2$, and
  $\ind(\alpha)$ is a power of $2$ that divides $2n$, over the generic point $\eta$, there is a degree-$2$ division algebra $A$ with
  class $\alpha$.  There is no degree-$2$ Azumaya algebra of class $\alpha$ over $X$, by the
  theorem, and so no maximal order in $A$ is Azumaya.
\end{proof}

The next corollary shows that, in general, there is no prime decomposition for Azumaya algebras as there
is for central simple algebras. This answers a question of Saltman~\cite{saltman}, appearing after Theorem
5.7.
\begin{corollary}
    For $n>1$ odd, there is a smooth affine complex variety $X$ and an Azumaya algebra $\Ascr$ on $X$ of degree
    $2n$ and period $2$ such that $\Ascr$ has no decomposition
    $\Ascr\iso\Ascr_2\otimes\Ascr_n$ for Azumaya algebras of degrees $2$ and $3$,
    respectively.
\end{corollary}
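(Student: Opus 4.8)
The plan is to reduce this to Theorem~\ref{thm:main} by a primary-decomposition argument in the Brauer group. First I would fix an odd integer $n>1$ and take $X$ and $\Ascr$ to be the smooth affine complex variety and the Azumaya algebra of degree $2n$ and period $2$ produced by Theorem~\ref{thm:main}; recall that, by that theorem, no Azumaya algebra of degree $2$ on $X$ has class $\cl(\Ascr)$. I would then suppose, toward a contradiction, that there is a decomposition $\Ascr\iso\Ascr_2\otimes\Ascr_n$ with $\deg\Ascr_2=2$ and $\deg\Ascr_n=n$, so that $\cl(\Ascr)=\cl(\Ascr_2)+\cl(\Ascr_n)$ in $\Br(X)$.

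The one fact to record is that the period of an Azumaya algebra divides its degree: the class of a degree-$m$ Azumaya algebra lifts along $\SL_m\to\PGL_m$ to $\Hoh^2_{\et}(X,\mu_m)$, hence is killed by $m$. Thus $\cl(\Ascr_2)$ is $2$-torsion while $\cl(\Ascr_n)$ is $n$-torsion, in particular of odd order. Since $\Br(X)$ is a torsion abelian group, it is the direct sum of its $\ell$-primary components; write $(\,\cdot\,)'$ for the projection onto the prime-to-$2$ summand. Applying $(\,\cdot\,)'$ to $\cl(\Ascr)=\cl(\Ascr_2)+\cl(\Ascr_n)$ and using that both $\cl(\Ascr)$ and $\cl(\Ascr_2)$ are $2$-primary, I would conclude $\cl(\Ascr_n)=\cl(\Ascr_n)'=0$. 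Then $\cl(\Ascr)=\cl(\Ascr_2)$, exhibiting $\Ascr_2$ as a degree-$2$ Azumaya algebra on $X$ with class $\cl(\Ascr)$ --- contradicting Theorem~\ref{thm:main}, and completing the proof.

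There is no substantial obstacle in this corollary: all of the mathematical content lies in Theorem~\ref{thm:main}, and what remains is the bookkeeping above. Its only delicate feature is the hypothesis that $n$ is odd, which is precisely what makes $\cl(\Ascr_n)$ invisible in the $2$-primary part of $\Br(X)$, where $\cl(\Ascr)$ lives. The same argument in fact rules out any factorization $\Ascr\iso\Ascr'\otimes\Ascr''$ with $\deg\Ascr'=2$ and $\deg\Ascr''$ odd.
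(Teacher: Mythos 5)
Your proof is correct and follows essentially the same route as the paper's. The paper compresses your primary-decomposition argument into the single observation that $\per(\Ascr)=\per(\Ascr_2)\per(\Ascr_n)$ (which, combined with $\per\mid\deg$ and $\gcd(2,n)=1$, forces $\per(\Ascr_n)=1$ and exhibits $\Ascr_2$ as a degree-$2$ algebra with class $\cl(\Ascr)$), whereas you spell out the same coprimality reasoning via the $2$-primary/prime-to-$2$ splitting of $\Br(X)$; the contradiction with Theorem~\ref{thm:main} is the same in both.
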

 \begin{proof}
      One can again take $X$ and $\Ascr$ as in the proof of the theorem.
      We should have $\per(\Ascr) = \per(\Ascr_2) \per (\Ascr_n)$. In particular, there should exist a degree-$2$,
      period-$2$ Azumaya algebra over $X$ with class $\cl(\Ascr)$, contradicting our choice of $X$.
\end{proof}

The construction of our counterexamples uses algebraic topology and topological Azumaya algebras, studied in~\cite{aw1}. By examining the
topology of certain classifying spaces, we are able to prove non-existence results about topological Azumaya algebras for CW complexes. We
then pass to algebraic examples by using Totaro's algebraic approximations to classifying space of affine algebraic
groups,~\cite{totaro}. This yields smooth quasi-projective varieties over $\CC$. By using Jouanolou's device, we can replace these by smooth
affine varieties and by using the affine Lefschetz hyperplane theorem, we can fashion an example on a smooth affine $6$-fold.

Another closely related line of inquiry has been taken up by various authors starting with
DeMeyer~\cite{demeyer}. We recall Problem 5 from the book of DeMeyer and
Ingraham~\cite{demeyer-ingraham}*{Section V.3}.

\begin{problem}
    For which commutative rings $R$ is the following true? If $\Ascr$ is an Azumaya
    $R$-algebra,
    then there exists a unique Azumaya $R$-algebra $\mathscr{D}$ equivalent to $\Ascr$ in $\Br(R)$ such
    that $\mathscr{D}$ has no idempotents besides $0$ and $1$ and such that
    $\Ascr^{\mathrm{op}}\iso{_{\mathscr{D}}\Hom}(M,M)$
    for some projective left $\mathscr{D}$-module $M$ that generates $_{\mathscr{D}}\Mod$.
\end{problem}

The Wedderburn theorem says that fields have this property, and we therefore call it the Wedderburn
property. When $R$ has no idempotents besides $0$ and $1$,
there is always at least one $\mathscr{D}$ satisfying the condition of the problem, so the
question is the uniqueness of $\mathscr{D}$.
DeMeyer~\cite{demeyer} showed that all semi-local rings have the Wedderburn property. Examples appear
in Bass~\cite{bass-stable}*{Page 46} and Childs~\cite{childs} of number rings that do not
have the Wedderburn property. In these cases, because the
dimension of the ring is $1$, there is always a maximal order in the division algebra over
the field of fractions $K$; in the rings of Bass and Childs this
maximal order is not unique. Our theorem furnishes a different kind of example where the uniqueness fails.

\begin{example}
    Let $n$, $X=\Spec R$, and $\Ascr$ be as in Theorem~\ref{thm:main}. Then, the index of
    $\cl(\Ascr)$ is $2$. Therefore, by~\cite{aw3}*{Proposition 6.1}, there exist Azumaya algebras $\Bscr_1,\ldots,\Bscr_k$ on $X$ such
    that $\gcd(\deg(\Ascr),\deg(\Bscr_1),\ldots,\deg(\Bscr_k))=2$ and
    $\cl(\Bscr_i)=\cl(\Ascr)$ for $1\leq i\leq k$. By Morita theory,
    there is an equivalence $_{\Ascr}\Mod\we{_{\Bscr_i}\Mod}$ of abelian categories of left
    modules. Our example ensures that we can choose both $\Ascr$ and $\Bscr_i$ to have as
    idempotents only $0$ and $1$. On the other hand, the Morita equivalence guarantees that
    $\Ascr^{\mathrm{op}}\iso{_{\Bscr_i}\Hom}(M_i,M_i)$ for some projective left $\Bscr_i$-module $M_i$. Since
    $\Ascr^{\mathrm{op}}\iso{_{\Ascr}\Hom}(\Ascr,\Ascr)$, it follows that the uniqueness
    part of the Wedderburn property fails for $R$.
\end{example}

The impetus to think about these questions came from a conversation of the first-named author with Colin Ingalls, Daniel Krashen, and David
Saltman on a hike to Emmaline Lake in Pingree Park, Colorado during the 10th Brauer Group Conference in August 2012. We thank Lawrence Ein
who mentioned to the same author the affine Lefschetz hyperplane theorem. Finally, we thank the referee for many suggestions, which greatly
improved the exposition.

\section{Proof}

The proof is topological in character.

An $n$-equivalence is defined to be a map $h: X \to Y$ of topological spaces such that $\pi_i(h)$ is an isomorphism for $i<n$ and a surjection for $i=n$ for all
choices of basepoint. Recall that in the construction of Postnikov towers of pointed spaces~\cite{hatcher}*{Chapter 4}, for any pointed
space $(X,x)$ there is a natural $(n+1)$-equivalence $(X,x)\rightarrow(\tau_{\leq n}X,x)$, where
\begin{equation*}
    \pi_i(\tau_{\leq n}X,x)\iso\begin{cases}
        \pi_i(X,x)  &   \text{if $i\leq n$,}\\
        0       &   \text{if $i>n$.}
    \end{cases}
\end{equation*}
In the remainder of the paper we omit the basepoint from the notation. We shall also assume that all constructions carried out on
topological spaces result in spaces having the homotopy-type of CW complexes.

Throughout, $\SL_n$ shall be used for $\SL_n(\CC)$, and similarly for other classical
groups. If $m$ and $n$ are positive integers such that $m$ divides $n$, write $\P(m,n)$ for the quotient of the special linear group $\SL_n$ by
the central subgroup $\mu_m$ of $m$-th roots of unity. Note the special cases $\P(1,n)=\SL_n$ and $\P(n,n) = \PGL_n$. We will freely make
use of the homotopy equivalences $\SU_n \to \SL_n$ and $\SU_n/ \mu_m \to
\P(m,n)$, where $\SU_n$ denotes the special unitary group.

In topology, the cohomological Brauer group of a space is $\Hoh^3(X,\ZZ)_{\tors}$. This and further background on topological Azumaya
algebras and the topological Brauer group may be found in~\cite{aw1}. We recall that $\BPGL_n$ classifies degree-$n$ Azumaya algebras. There is
a natural map from $\BPGL_n$ to the Eilenberg-MacLane space $K(\ZZ/n,2)$ the composition of which with the Bockstein $K(\ZZ/n,2)\rightarrow
K(\ZZ,3)$ yields the Brauer class of the degree-$n$ Azumaya algebra.

We shall make use throughout of the following elementary calculations:
\[ \Hoh^1(\BP(m,n), \ZZ) = \Hoh^2(\BP(m,n), \ZZ) = 0, \qquad \Hoh^3(\BP(m,n), \ZZ) = \ZZ/m. \]

 There is a homotopy-pullback diagram
\begin{equation*} \label{eq:mapsNeeded}
\xymatrix{\BP(m,n) \ar[r] \ar[d] &    \BPGL_n \ar[d] \\
        K(\ZZ/m,2)  \ar[r] &   K(\ZZ/n,2),}
\end{equation*}
where the top horizontal arrow is induced by the quotient map $\P(m,n)\rightarrow\PGL_n$, and the bottom horizontal arrow is induced by the
inclusion of $\ZZ/m$ into $\ZZ/n$.  The space $\BP(m,n)$ is equipped with a canonical
degree-$n$ topological Azumaya algebra $\Ascr$ such that the
class $\cl(\Ascr)$ is $m$-torsion in the topological Brauer group.

To show that a class in the topological Brauer group may have period $2$ and index $2$, while not being represented
by a topological Azumaya algebra of degree $2$, we construct an explicit example of such a class $\cl(\Ascr)$, where $\Ascr$ is of degree $2n$ for $n >
1$ odd. To do this, we shall exhibit certain spaces $X$ with the additional property that
\[ \Hoh^1(X, \ZZ) = \Hoh^2(X, \ZZ) = 0, \qquad \Hoh^3(X, \ZZ) = \ZZ/2. \] 
The Azumaya algebra $\Ascr$ will have Brauer class $\cl(\Ascr)$
generating $\Hoh^3(X,\ZZ)$. Since $\Hoh^2(X, \ZZ) = 0$, and since $\Hoh^2(\BPGL_2, \ZZ) = 0$, there is a bijection between $2$--torsion
cohomology classes $X \to K(\ZZ,3)$ and cohomology classes $X \to K(\ZZ/2, 2)$ and similarly for $\BPGL_2$. It will suffice to show that the
map $X \to K(\ZZ/2, 2)$ cannot be factored as $X \to \BPGL_2 \to K(\ZZ/2,2)$, which is equivalent to showing that there is no map $X \to
\BPGL_2$ inducing an isomorphism on $\Hoh^2(\cdot, \ZZ/2)$. We shall show below that any space $X$ which is $6$--equivalent to $\BP(2,2n)$
with $n>1$ odd will serve. The space $\BP(2,2n)$ itself settles the topological version of the question, but is not a variety; in order to apply
the result in algebraic geometry we shall have to use a finite approximation to $\BP(2,2n)$.

The obstruction we arrive at is in the higher homotopy groups and we collect some relevant facts regarding these here. The group $\SU_2$ is the
group of unit quaternions, and is homeomorphic to $S^3$. The group $\PU_2$ is homeomorphic to $\RP^3$, and the projection map $S^3 \to
\RP^3$, equivalently $\SU_2 \to \PU_2$, induces an isomorphism on all homotopy groups except $\pi_1$, where $\pi_1(\PU_2) = \ZZ/2$. Using $\PU_2\we\PGL_2$, the
classifying-space functor, and the fact that $\pi_4(S^3)=\ZZ/2$ (see~\cite{hatcher}*{Corollary 4J.4}) gives:
\begin{align*}
  \pi_2(\BPGL_2) & = \ZZ/2, & & \pi_4(\BPGL_2)  = \ZZ, \\
  \pi_3(\BPGL_2) & = 0, & & \pi_5(\BPGL_2)  =  \ZZ/2.
\end{align*}
If $n>1$, then Bott periodicity shows that:
\begin{align*}
  \pi_2(\BP(2,2n)) & = \ZZ/2, & & \pi_4(\BP(2,2n))  = \ZZ, \\
  \pi_3(\BP(2,2n)) & = 0, & & \pi_5(\BP(2,2n))  =  0.
\end{align*}
There is a map $\SL_2 \to \SL_{2n}$ given by $n$-fold block-summation, which descends to a map $\PGL_2 \to
\P(2,2n)$. The induced map $\pi_2 (\BPGL_2) \to \pi_2(\BP(2,2n))$ is an isomorphism, and therefore so too is the map $\Hoh^2(\BP(2,2n),
\ZZ/2) \to \Hoh^2(\BPGL_2, \ZZ/2) \iso \ZZ/2$.

The main technical lemma of this paper is the following:
\begin{lemma} \label{l:1}
    Suppose that $f: \tau_{\leq 5}\BPGL_2\to \tau_{\leq 5} \BPGL_2$ is a map which
  induces an isomorphism on $\Hoh^2(\tau_{\leq 5}\BPGL_2, \ZZ/2) \iso \ZZ/2$. Then $f$ induces an isomorphism on $\pi_5(\tau_{\leq 5} \BPGL_2) \iso \ZZ/2$.
\end{lemma}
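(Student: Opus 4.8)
The plan is to analyze the Postnikov tower of $\tau_{\leq 5}\BPGL_2$ and track how the hypothesis on $\Hoh^2(\cdot,\ZZ/2)$ forces compatible behavior on each successive homotopy group. Recall the homotopy groups $\pi_2 = \ZZ/2$, $\pi_3 = 0$, $\pi_4 = \ZZ$, $\pi_5 = \ZZ/2$. First I would observe that a self-map $f$ inducing an isomorphism on $\Hoh^2(\cdot,\ZZ/2)$ must induce an isomorphism on $\pi_2 \iso \ZZ/2$, since $\Hoh^2(\cdot,\ZZ/2)$ of a space with $\pi_1 = 0$, $\pi_2 = \ZZ/2$ is dual to $\pi_2$ (Hurewicz). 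Because $\pi_3 = 0$, the $3$-type is the same as the $2$-type, namely $K(\ZZ/2,2)$, and $f$ is an isomorphism on $\tau_{\leq 3}$. I would then proceed up the tower: the next stage $\tau_{\leq 4}\BPGL_2$ sits in a fibration over $K(\ZZ/2,2)$ with fiber $K(\ZZ,4)$, classified by a $k$-invariant in $\Hoh^5(K(\ZZ/2,2),\ZZ)$, and I would need to pin down which class it is and conclude that $f$ acts on $\pi_4 = \ZZ$ by $\pm 1$ (compatibly with the forced isomorphism on $\pi_2$), using that $f$ must respect the $k$-invariant.

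Next I would handle the crucial jump from $\tau_{\leq 4}$ to $\tau_{\leq 5}$. The relevant $k$-invariant lives in $\Hoh^6(\tau_{\leq 4}\BPGL_2, \ZZ/2)$, and the key point is to show it is nonzero. I would compute $\Hoh^6(\tau_{\leq 4}\BPGL_2, \ZZ/2)$, or at least the relevant subquotient, using the Serre spectral sequence for $K(\ZZ,4)\to\tau_{\leq 4}\BPGL_2\to K(\ZZ/2,2)$, with input the known cohomology of $K(\ZZ/2,2)$ (powers of the fundamental class $\iota_2$ and its Steenrod operations $\Sq^1\iota_2$, $\Sq^2\iota_2 = \iota_2^2$, etc.) and of $K(\ZZ,4)$. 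The real content is that the map $\pi_4(\SU_2) \to \pi_4(\tau_{\leq 5}\BPGL_2)$, equivalently the attaching data coming from $\pi_4(S^3) = \ZZ/2$, makes the $k$-invariant a genuinely nontrivial, noncentral-looking class — something like $\Sq^2$ applied to (the mod-$2$ reduction of) the fundamental class of $K(\ZZ,4)$, plus possibly a correction term pulled back from the base. Then, since $f$ induces an isomorphism on $\pi_2$ and $\pm 1$ on $\pi_4$, it induces an isomorphism on $\Hoh^*(\tau_{\leq 4}\BPGL_2,\ZZ/2)$ in the relevant range (the mod-$2$ cohomology is generated by classes on which $f$ is visibly an iso), hence fixes the $k$-invariant up to the automorphisms already determined; because the target $\pi_5 = \ZZ/2$ has only the identity automorphism, compatibility of $f$ with the fibration $K(\ZZ/2,5)\to\tau_{\leq 5}\BPGL_2\to\tau_{\leq 4}\BPGL_2$ forces $f$ to be an isomorphism on $\pi_5$, as claimed. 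The nonvanishing of the $k$-invariant is what rules out the degenerate case in which $f$ could kill $\pi_5$.

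The main obstacle I expect is precisely the identification of this last $k$-invariant in $\Hoh^6(\tau_{\leq 4}\BPGL_2,\ZZ/2)$ and the verification that it is nonzero. This requires an honest spectral-sequence computation of the mod-$2$ cohomology of $\tau_{\leq 4}\BPGL_2$ through degree $6$, keeping track of the differentials governed by the first $k$-invariant (relating $\iota_4$ to a class in $\Hoh^5(K(\ZZ/2,2),\ZZ)$), and then pulling out the $\Sq^2$-type indecomposable that represents the second $k$-invariant. An alternative, possibly cleaner route is to work with the homotopy fiber sequences for the maps $\SU_2 = S^3 \to \BPGL_2$'s classifying data and use the known attaching map $\eta^2$-type element $\pi_4(S^3) = \ZZ/2$ generated by $\Sq^2$ on the bottom cell, transporting everything through the classifying-space functor; the element $\pi_5(\BPGL_2) = \ZZ/2$ then sees $\Sq^2$ hitting the $4$-dimensional generator, and any $f$ fixing mod-$2$ cohomology in low degrees must respect this Steenrod-module structure and hence be an isomorphism on $\pi_5$. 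Either way, the heart of the argument is a secondary-operation / Steenrod-algebra bookkeeping in a narrow range of degrees, and that is where the care is needed; everything else — the Hurewicz identifications on $\pi_2$, the naturality of Postnikov sections, and the triviality of $\Aut(\ZZ/2)$ — is formal.
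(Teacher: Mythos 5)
Your route — through the Postnikov tower of $\tau_{\leq 5}\BPGL_2$, the nonvanishing of its last $k$-invariant, and the compatibility of a self-map with that $k$-invariant — is conceptually sound, but it is genuinely different from the argument in the paper. The paper loops down instead: it notes $\Omega\tau_{\leq 5}\BPGL_2 \simeq \tau_{\leq 4}\PGL_2$, which is $5$-equivalent to $\RP^3$, so $\Hoh^{\leq 4}(\cdot,\ZZ/2) \iso \ZZ[x]/(2,x^4)$. An isomorphism on $\Hoh^1(\cdot,\ZZ/2)$ (your $\Hoh^2$ upstairs) forces an isomorphism on $\Hoh^3(\cdot,\ZZ/2)$ by cubing the generator, hence on $\Hoh_3$; the Hurewicz map coming from the double cover $S^3 \to \RP^3$ exhibits $\pi_3 \iso \ZZ$ as the index-$2$ subgroup of $\Hoh_3 \iso \ZZ$, so $\Omega f$ acts on $\pi_3$ by an odd integer $q$; and the natural transformation $(\Sigma\eta)^* : \pi_3 \to \pi_4$, precomposition with the suspended Hopf map, gives a natural isomorphism $\pi_3 \otimes \ZZ/2 \iso \pi_4$ here, carrying the conclusion to $\pi_4(\tau_{\leq 4}\PGL_2) = \pi_5(\tau_{\leq 5}\BPGL_2)$. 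This sidesteps entirely the Serre spectral sequence for $K(\ZZ,4)\to\tau_{\leq 4}\BPGL_2\to K(\ZZ/2,2)$ and the identification of the $k$-invariant $k_5\in\Hoh^6(\tau_{\leq 4}\BPGL_2,\ZZ/2)$ that you correctly flag as the heart of your version; the price is the somewhat unusual gadget $(\Sigma\eta)^*$ on homotopy groups rather than the standard Steenrod-operation machinery. Your second, $\BSU_2$-based sketch — restricting $k_5$ to the fiber $K(\ZZ,4)=\tau_{\leq 4}\BSU_2$, where it is $\Sq^2\bar\iota_4$, and tracking that class — is closer in spirit to the paper and is probably the cleanest way to close your route.

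Two corrections to the proposal. First, the hypothesis gives only an isomorphism on $\Hoh^2(\cdot,\ZZ/2)$, so you cannot conclude that $f$ acts by $\pm 1$ on $\pi_4\iso\ZZ$. What the first $k$-invariant $k_4\in\Hoh^5(K(\ZZ/2,2),\ZZ)$ actually enforces, given that the induced map on $\tau_{\leq 3}$ is homotopic to the identity ($\Aut(\ZZ/2)$ is trivial), is $f_*\cdot k_4 = k_4$, i.e.\ $f_*\equiv 1 \pmod{\ord(k_4)}$; this says only that $f_*$ is an odd integer, which — as in the paper — is exactly what is needed. Second, the final step does not require $f$ to be an isomorphism on all of $\Hoh^6(\tau_{\leq 4}\BPGL_2,\ZZ/2)$, only that $\bar f^*(k_5)\neq 0$; since $\bar f$ preserves the fiber $K(\ZZ,4)$, acts there by an odd integer on $\pi_4$ (hence by the identity on $\bar\iota_4$ mod $2$), and $k_5$ restricts to $\Sq^2\bar\iota_4\neq 0$ on that fiber by comparison with $\BSU_2$, the needed nonvanishing follows without computing the whole cohomology group.
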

\begin{proof}
  It suffices to show that if $\Omega f : \Omega \tau_{\leq 5} \BPGL_2 \to \Omega \tau_{\leq 5} \BPGL_2$ induces an isomorphism on $\Hoh^1(\Omega
  \tau_{\leq 5} \BPGL_2, \ZZ/2) = \Hoh^2(\tau_{\leq 5} \BPU_2, \ZZ/2)$, then $\Omega f$ induces an isomorphism on $\pi_4( \Omega \tau_{\leq 5} \BPGL_2) =
  \pi_5(\tau_{\leq 5} \BPGL_2)$.  Note that $\Omega  \tau_{\leq 5} \BPGL_2 \weq \tau_{\leq 4}\PGL_2$, by consideration of homotopy groups.

  We proceed in three steps to complete the proof. We first show that $\Omega f$ induces an isomorphism on $\Hoh_3(\tau_{\leq 4}\PGL_2, \ZZ/2)$, then
  on $\pi_3(\tau_{\leq 4}\PGL_2) \tensor_\ZZ \ZZ/2$, and lastly on $\pi_4( \tau_{\leq 4}\PGL_2)$.  

  The map $\RP^3\we\PGL_2\rightarrow\tau_{\leq 4}\PGL_2$ is a $5$-equivalence.
  Therefore, $\Hoh^{\le 4}(\tau_{\leq 4}\PGL_2, \ZZ/2)$ is the ring $\ZZ[x]/(2,x^4)$ with $\deg x =1$. The map $\Omega f$ induces an
  isomorphism on the vector space $\Hoh^3(\tau_{\leq 4}\PGL_2, \ZZ/2)$, and so on the dual space $\Hoh_3( \tau_{\leq 4}\PGL_2, \ZZ/2)$. This completes the first step.

  The first step implies that the map induced on $\Hoh_3(\tau_{\leq 4} \PGL_2, \ZZ) \iso \ZZ$ is multiplication by an odd integer,
  $q$. The Hurewicz map is a natural transformation of functors, and so there is a diagram
\[ \xymatrix@C=80pt{ \pi_3(S^3) \ar^{\iso}[r] \ar^{\iso}[d] & \pi_3(\PGL_2) \iso \pi_3(\tau_{\leq 4} \PGL_2) \ar@{^(->}^{\times 2}[d] \\
\Hoh_3(S^3,\ZZ) \ar^{\times 2}@{^(->}[r]  & \Hoh_3(\PGL_2, \ZZ) \iso \Hoh_3(\tau_4 \PGL_2, \ZZ). } \]
We can identify $\pi_3(\tau_{\le 4} \PGL_2)$ with the index-$2$ subgroup of $\Hoh_3(\tau_{\le 4} \PGL_2)\iso \ZZ$. The map $\Omega f$ therefore
induces multiplication by an odd integer, $q$, on $\pi_3(\tau_{\le 4} \PGL_2)$, and consequently an isomorphism on $\pi_3(\tau_{\le 4}
\PGL_2) \tensor_{\ZZ} \ZZ/2$. This completes the second step.

  Finally, there is a natural transformation $(\Sigma\eta)^*: \pi_3( X ) \to \pi_4( X )$ given by precomposition with the suspension of the
  Hopf map $\Sigma\eta: S^4 \to S^3$. One can verify that this is in fact a natural
  homomorphism of groups, see \cite{whitehead}*{X(8)}.  Since there
  is an isomorphism $\pi_3(S^3) \to \pi_3(\tau_{\leq 4}\PGL_2)$, and since the natural transformation $(\Sigma \eta)^*$ induces an isomorphism $\pi_3(
  S^3) \tensor_\ZZ \ZZ/2 \iso \pi_4(S^3)$, it follows that there is a natural isomorphism $\pi_3( \tau_{\leq 4}\PGL_2 ) \tensor_\ZZ \ZZ/2 \iso
  \pi_4(\tau_{\leq 4}\PGL_2)$. By naturality, $\Omega f$ therefore induces an isomorphism on $\pi_4( \tau_{\leq 4}\PGL_2)$.
\end{proof}

\begin{proposition}\label{prop:1}
  Let $n>1$ be an integer. Suppose $X$ is a CW complex and $h: X \to \tau_{\leq 5} \B\P(2,2n)$ is a $6$--equivalence. There is no map $f: X
  \to \BPGL_2$ inducing an isomorphism on $\Hoh^2(\cdot, \ZZ/2)$.
    \begin{proof}
      Suppose for the sake of contradiction that such a map $f$ exists. Let $s$ denote a homotopy-inverse to the equivalence
      $\tau_{\leq 5}(h)$. Then the composite
        \begin{equation*}
            \xymatrix{\tau_{\leq 5} \BPGL_2 \ar[r] & \tau_{\leq 5} \B \P(2,2n) \ar^{\hspace{1.2em}s}[r] & \tau_{\leq 5} X\ar^{\tau_{\leq 5} f}[rr] & & \tau_{\leq 5} \BPGL_2,}
        \end{equation*}
        where the first map is given by block-summation, induces an isomorphism on the cohomology group $\Hoh^2(\tau_{\leq 5}\BPGL_2, \ZZ/2)$. Since
        $\pi_5(\BP(2,2n))=0$, the composite is necessarily the $0$--map on $\pi_5(\tau_{\leq 5}\BPGL_2)$, contradicting Lemma \ref{l:1}.
    \end{proof}
\end{proposition}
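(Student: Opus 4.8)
The plan is to argue by contradiction, reducing the statement to Lemma \ref{l:1} by a truncation and transfer-of-structure argument. Suppose $f: X \to \BPGL_2$ induces an isomorphism on $\Hoh^2(\cdot, \ZZ/2)$. Since $\BPGL_2$ has no homotopy above... well, it does, but we only care about low degrees: the map $f$ factors through the Postnikov truncation, giving $\tau_{\leq 5} f : \tau_{\leq 5} X \to \tau_{\leq 5} \BPGL_2$, and because $h$ is a $6$-equivalence, $\tau_{\leq 5}(h): \tau_{\leq 5} X \to \tau_{\leq 5}\B\P(2,2n)$ is a homotopy equivalence; let $s$ be a homotopy inverse. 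The point is then to build an endomorphism of $\tau_{\leq 5}\BPGL_2$ out of this data and apply the lemma.

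The key step is to assemble the composite
\begin{equation*}
    \xymatrix{\tau_{\leq 5} \BPGL_2 \ar[r] & \tau_{\leq 5} \B \P(2,2n) \ar^{\hspace{1.2em}s}[r] & \tau_{\leq 5} X\ar^{\tau_{\leq 5} f}[rr] & & \tau_{\leq 5} \BPGL_2,}
\end{equation*}
where the first arrow is the truncation of the block-summation map $\BPGL_2 \to \B\P(2,2n)$ recalled in the discussion preceding the lemma. I would check that this composite induces an isomorphism on $\Hoh^2(\tau_{\leq 5}\BPGL_2, \ZZ/2) \iso \ZZ/2$: the block-summation map does so by the computation that $\pi_2(\BPGL_2) \to \pi_2(\B\P(2,2n))$ is an isomorphism (hence so is the corresponding map on $\Hoh^2(\cdot,\ZZ/2)$, these spaces being simply connected with $\pi_2 = \ZZ/2$), the map $s$ does so since it is an equivalence, and $\tau_{\leq 5}f$ does so because $f$ was assumed to. By Lemma \ref{l:1}, the composite must then induce an isomorphism on $\pi_5(\tau_{\leq 5}\BPGL_2) \iso \ZZ/2$.

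Finally I would derive the contradiction from the homotopy groups recorded earlier: since $\pi_5(\B\P(2,2n)) = 0$ for $n > 1$, the middle factor $\tau_{\leq 5}\B\P(2,2n)$ has trivial $\pi_5$, so the composite factors through a space with vanishing $\pi_5$ and is therefore the zero map on $\pi_5$ — contradicting the isomorphism just obtained. The only real subtlety, and the step I would be most careful about, is the first one: verifying that block-summation induces an isomorphism on $\Hoh^2(\cdot, \ZZ/2)$ and that this property is preserved under truncation and composition with $s$ and $\tau_{\leq 5} f$. This is where one uses that all the spaces in sight are simply connected with $\pi_2 \cong \ZZ/2$, so that $\Hoh^2(\cdot,\ZZ/2)$ is canonically dual to $\pi_2 \otimes \ZZ/2$ and a map is an isomorphism on this cohomology group precisely when it is on $\pi_2$; everything else is formal manipulation of Postnikov towers.
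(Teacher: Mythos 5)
Your proposal is correct and follows essentially the same route as the paper: construct the same composite endomorphism of $\tau_{\leq 5}\BPGL_2$ out of block-summation, the inverse $s$ of $\tau_{\leq 5}(h)$, and $\tau_{\leq 5}f$; verify it is an isomorphism on $\Hoh^2(\cdot,\ZZ/2)$; then observe it must vanish on $\pi_5$ because it factors through $\tau_{\leq 5}\B\P(2,2n)$ with $\pi_5 = 0$, contradicting Lemma~\ref{l:1}. The only difference is that you spell out the $\Hoh^2$ verification (via $\pi_2$, Hurewicz, and universal coefficients for simply connected spaces) in more detail than the paper, which simply asserts it using the $\pi_2$ computation already recorded in the surrounding discussion.
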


We are now ready to prove our main theorem.  If $X$ is a scheme and $\alpha\in\Br(X)$, we let $\ind(\alpha)$ denote the greatest common
divisor of the degrees of all Azumaya algebras in the class $\alpha$. When $X$ is a regular and noetherian integral scheme, we showed
in~\cite{aw3}*{Proposition 6.1}, using an argument suggested by Saltman, that $\ind(\alpha)=\ind(\alpha_\eta)$, where $\eta$ is the generic
point of $X$.  Note that for regular noetherian schemes, if $n$ is odd and $\Ascr$ is an Azumaya algebra of degree $2n$ and period $2$, then
the Brauer class $\cl(\Ascr)$ has index $2$, because the period and index have the same prime divisors by~\cite{aw3}*{Proposition 6.1}.

\begin{proof}[Proof of Theorem 1.1]
  Let $V$ be an algebraic linear representation of $\P(2,2n)$ over $\CC$ such that $\P(2,2n)$ acts freely outside an invariant closed
  subscheme $S$ of codimension at least $4$, and such that $Y=(V-S)/\P(2,2n)$ exists as a smooth quasi-projective complex variety. Such
  representations exist by~\cite{totaro}*{Remark 1.4}. There is a classifying map $Y \to \BP(2,2n)$, classifying the canonical algebraic
  $\P(2,2n)$--torsor $V-S \to (V-S)/\P(2,2n)$. As the codimension of $S$ is at least $4$, the scheme $V-S$ is $6$--connected.  It follows
  from a map of long exact sequences of homotopy groups that $Y \to \BP(2,2n)$ is a $7$--equivalence of topological spaces, and so there
  exists $6$--equivalence $Y \to \tau_{\le 5} \BP(2, 2n)$.

  The algebraic $\P(2,2n)$-torsor on $Y$ induces a canonical algebraic Azumaya algebra $\Ascr$ over $Y$ of degree $2n$ and of period
  $2$. Suppose $\Bscr$ is an algebraic Azumaya algebra over $Y$ of (algebraic) period $2$. Then the class of $\Bscr$ in the algebraic
  cohomological Brauer group $\Hoh^2_\et(Y, \Gm)_{\tors}$ is of order $2$. Since $\Hoh^2_\et(Y,
  \mu_2) \iso \Hoh^2(Y, \ZZ/2) = \ZZ/2$, it follows that there is a unique lift of
  $\cl(\Bscr)$ to $\Hoh^2_{\et}(Y,\ZZ/2)$. By Proposition~\ref{prop:1}, there is no map $Y \to \BPGL_2$ which is nontrivial on
  $\Hoh^2(\cdot, \ZZ/2)$. Therefore, $\Bscr$ is not of degree $2$. In particular, $\Ascr$ is not
  equivalent to any Azumaya algebra of degree $2$.

  Although the variety $Y$ as constructed need not be affine, the argument as carried out above relies only on the $6$-equivalence $Y
  \to \tau_{\le 5} \BP(2,2n)$ and the algebraic nature of the canonical degree-$6$ Azumaya
  algebra. The variety $Y$ may be replaced,
  using Jouanolou's device~\cite{jouanolou}, by an affine bundle $p: X \to Y$
  such that $X$ is smooth and affine. The map $p$ is a homotopy equivalence, since the
  fibers are affine spaces. One may pull-back the Azumaya algebra $\Ascr$ to $X$, and there is no degree $2$ topological
  Azumaya algebra that is equivalent to $p^* \Ascr$.

  Once such an $X$ has been found, one may employ the affine Lefschetz hyperplane theorem (see~\cite{goresky-macpherson}*{Introduction,
    Section 2.2}), which says that if $H$ is a generic hyperplane in $\A^k$, then $X\cap H\rightarrow X$ is a $(j-1)$-equivalence. By
  intersecting many times, all the while ensuring that the intersections are smooth, we can replace $X$ by a $6$-dimensional smooth affine
  variety.
\end{proof}

We remark that our method of proof is to show that there exists a finite CW complex $X$ and a class $\alpha\in\Br(X)\iso\Hoh^3(X,\ZZ)_{\tors}$ such that
$\ind(\alpha)=2$, but where $\alpha$ is not represented by a degree $2$-topological Azumaya algebra. This is a new result even in the setting of
topological Azumaya algebras studied in~\cite{aw1}.

\begin{bibdiv}
\begin{biblist}

\bib{aw1}{article}{
    author = {Antieau, Benjamin},
    author = {Williams, Ben},
    title = {The period-index problem for twisted topological K-theory},
    journal = {ArXiv e-prints},
    eprint = {http://arxiv.org/abs/1104.4654},
    year = {2011},
}

\bib{aw3}{article}{
    author = {Antieau, Benjamin},
    author = {Williams, Ben},
    title = {The topological period-index problem for $6$-complexes},
    journal = {ArXiv e-prints},
    eprint = {http://arxiv.org/abs/1208.4430},
    year = {2012},
}




\bib{auslander-goldman}{article}{
    author={Auslander, Maurice},
    author={Goldman, Oscar},
    title={The Brauer group of a commutative ring},
    journal={Trans. Amer. Math. Soc.},
    volume={97},
    date={1960},
    pages={367--409},
    issn={0002-9947},
}

\bib{bass-stable}{article}{
    author={Bass, H.},
    title={$K$-theory and stable algebra},
    journal={Inst. Hautes \'Etudes Sci. Publ. Math.},
    number={22},
    date={1964},
    pages={5--60},
    issn={0073-8301},
}





\bib{childs}{article}{
    author={Childs, L. N.},
    title={On projective modules and automorphisms of central separable algebras},
    journal={Canad. J. Math.},
    volume={21},
    date={1969},
    pages={44--53},
    issn={0008-414X},
}


\bib{dejong}{article}{
    author={de Jong, Johan},
    title={The period-index problem for the Brauer group of an algebraic surface},
    journal={Duke Math. J.},
    volume={123},
    date={2004},
    number={1},
    pages={71--94},
    issn={0012-7094},
}

\bib{dejong-starr}{article}{
    author={Starr, Jason},
    author={de Jong, Johan},
    title={Almost proper GIT-stacks and discriminant avoidance},
    journal={Doc. Math.},
    volume={15},
    date={2010},
    pages={957--972},
    issn={1431-0635},
}

\bib{demeyer}{article}{
    author={DeMeyer, Frank},
    title={Projective modules over central separable algebras},
    journal={Canad. J. Math.},
    volume={21},
    date={1969},
    pages={39--43},
    issn={0008-414X},
}

\bib{demeyer-ingraham}{book}{
    author={DeMeyer, Frank},
    author={Ingraham, Edward},
    title={Separable algebras over commutative rings},
    series={Lecture Notes in Mathematics, Vol. 181},
    publisher={Springer-Verlag},
    place={Berlin},
    date={1971},
    pages={iv+157},
}


\bib{goresky-macpherson}{book}{
    author={Goresky, Mark},
    author={MacPherson, Robert},
    title={Stratified Morse theory},
    series={Ergebnisse der Mathematik und ihrer Grenzgebiete (3)},
    volume={14},
    publisher={Springer-Verlag},
    place={Berlin},
    date={1988},
    pages={xiv+272},
    isbn={3-540-17300-5},
}


\bib{grothendieck-brauer-2}{article}{
    author={Grothendieck, Alexander},
    title={Le groupe de Brauer. II. Th\'eorie cohomologique},
    conference={
        title={Dix Expos\'es sur la Cohomologie des Sch\'emas},
    },
    book={
        publisher={North-Holland},
        place={Amsterdam},
    },
    date={1968},
    pages={67--87},
}

\bib{hatcher}{book}{
    author={Hatcher, Allen},
    title={Algebraic topology},
    publisher={Cambridge University Press},
    place={Cambridge},
    date={2002},
    pages={xii+544},
    isbn={0-521-79160-X},
    isbn={0-521-79540-0},
}

\bib{hoobler}{article}{
    author={Hoobler, Raymond T.},
    title={A cohomological interpretation of Brauer groups of rings},
    journal={Pacific J. Math.},
    volume={86},
    date={1980},
    number={1},
    pages={89--92},
    issn={0030-8730},
}



\bib{jouanolou}{article}{
    author={Jouanolou, J. P.},
    title={Une suite exacte de Mayer-Vietoris en $K$-th\'eorie alg\'ebrique},
    conference={
    title={Algebraic $K$-theory, I: Higher $K$-theories (Proc. Conf.,
    Battelle Memorial Inst., Seattle, Wash., 1972)},
    },
    book={
    publisher={Springer},
    place={Berlin},
    },
    date={1973},
    pages={293--316. Lecture
    Notes in Math., Vol.
    341},
}

\bib{saltman}{book}{
    author={Saltman, David J.},
    title={Lectures on division algebras},
    series={CBMS Regional Conference Series in Mathematics},
    volume={94},
    publisher={Published by American Mathematical Society, Providence, RI},
    date={1999},
    pages={viii+120},
    isbn={0-8218-0979-2},
}


\bib{totaro}{article}{
    author={Totaro, Burt},
    title={The Chow ring of a classifying space},
    conference={
    title={Algebraic $K$-theory},
    address={Seattle, WA},
    date={1997},
    },
    book={
    series={Proc. Sympos. Pure Math.},
    volume={67},
    publisher={Amer. Math. Soc.},
    place={Providence, RI},
    },
    date={1999},
    pages={249--281},
}


\bib{whitehead}{book}{
   author={Whitehead, George W.},
   title={Elements of homotopy theory},
   series={Graduate Texts in Mathematics},
   volume={61},
   publisher={Springer-Verlag},
   place={New York},
   date={1978},
   pages={xxi+744},
   isbn={0-387-90336-4},
}


\end{biblist}
\end{bibdiv}

\end{document}